\newcommand{\tpmod}[1]{{\@displayfalse\pmod{#1}}}
\newtheorem{thm}{Theorem}[section]
\newtheorem{lemma}[thm]{Lemma}
\newtheorem{prop}[thm]{Proposition}
\newtheorem{cor}[thm]{Corollary}
\theoremstyle{remark}
\theoremstyle{definition}
    \newtheorem{defn}[thm]{Definition}
\newtheorem{rem}[thm]{Remark}
\theoremstyle{THM}
\newcommand{\abs}[1]{\left|{#1}\right|}
\def\FF {{\mathcal F}}
\def\GG {{\mathcal G}}
\def\Z {{\mathbb Z}}
\def\Q {{\mathbb Q}}
\def\GG {{\mathcal G}}
\def\F {{\mathbb F}}
\def\Z {{\mathbb Z}}
\def\Q {{\mathbb Q}}
\def\Gal{{\mbox{{\rm{Gal}}}}}
\def\red#1 {\textcolor{red}{#1 }}
\def\blue#1 {\textcolor{blue}{#1 }}
\numberwithin{equation}{section}
\def\Z {{\mathbb Z}}
\newcommand{\Mod}[1]{\ (\mathrm{mod}\enspace #1)}
\newcommand{\mmod}[1]{\ \mathrm{mod}\enspace #1}
\begin{document}

\title[The monogenicity of certain reciprocal quintinomials]{The monogenicity and Galois groups of certain reciprocal quintinomials}


\author{Lenny Jones}
\address{Professor Emeritus, Department of Mathematics, Shippensburg University, Shippensburg, Pennsylvania 17257, USA}
\email[Lenny~Jones]{doctorlennyjones@gmail.com}

\date{\today}

\begin{abstract}
We say that a monic polynomial $f(x)\in \Z[x]$ is \emph{monogenic} if $f(x)$ is irreducible over $\Q$ and $\{1,\theta,\theta^2,\ldots ,\theta^{\deg(f)-1}\}$ is a basis for $\Z_K$, the ring of integers of $K=\Q(\theta)$, where $f(\theta)=0$. For $n\ge 2$, we define the reciprocal quintinomial
\[\FF_{n,A,B}(x):=x^{2^n}+Ax^{3\cdot 2^{n-2}}+Bx^{2^{n-1}}+Ax^{2^{n-2}}+1\in \Z[x].\]
In this article, we extend our previous work on the monogenicity of $\FF_{n,A,B}(x)$ to treat the specific previously-unaddressed situation of $A\equiv B\equiv 1\pmod{4}$. Moreover, we determine the Galois group over $\Q$ of $\FF_{n,A,B}(x)$ in special cases.
\end{abstract}

\subjclass[2020]{Primary 11R04; Secondary 11R09, 11R32}
\keywords{monogenic, reciprocal, quintinomial, Galois}

\maketitle
\section{Introduction}\label{Section:Intro}

 We say that a monic polynomial $f(x)\in \Z[x]$ is \emph{monogenic} if $f(x)$ is irreducible over $\Q$ and $\{1,\theta,\theta^2,\ldots ,\theta^{\deg(f)-1}\}$ is a basis for $\Z_K$, the ring of integers of $K=\Q(\theta)$, where $f(\theta)=0$. Hence, $f(x)$ is monogenic if and only if  $\Z_K=\Z[\theta]$. For the minimal polynomial $f(x)$ of an algebraic integer $\theta$ over $\Q$, it is well known \cite{Cohen} that
\begin{equation} \label{Eq:Dis-Dis}
\Delta(f)=\left[\Z_K:\Z[\theta]\right]^2\Delta(K),
\end{equation}
where $\Delta(f)$ and $\Delta(K)$ are the discriminants over $\Q$ of $f(x)$ and the number field $K$, respectively.
Thus, from \eqref{Eq:Dis-Dis}, $f(x)$ is monogenic if and only if $\Delta(f)=\Delta(K)$.

  Throughout this article, for $A,B,n\in \Z$ with $AB\ne 0$ and $n\ge 2$, we let:
  \begin{align}\label{Eq:Defs}
  \begin{split}
  \FF_{n,A,B}(x)&=x^{2^n}+Ax^{3\cdot 2^{n-2}}+Bx^{2^{n-1}}+Ax^{2^{n-2}}+1,\\
  W_1&=B+2-2A, \quad W_2=B+2+2A, \quad W_3=A^2-4B+8,\\
  P&=\gcd(W_1,W_3), \quad Q=\gcd(W_1,W_2), \quad R=\gcd(W_2,W_3),\\
  D_n& \ \mbox{denote the dihedral group of order $2n$},\\
  C_n& \ \mbox{denote the cyclic group of order $n$.}
  \end{split}
  \end{align}

The following theorem was proven in \cite{JonesBAMS}:
 \begin{thm}\label{Thm:BAMS} If $W_1W_2W_3$ is squarefree and
 \[(A \mmod{4}, \ B \mmod{4})\in \{(1,3),(3,1),(3,3)\},\] 
 then $\FF_{n,A,B}(x)$
 is monogenic for all $n\ge 2$.
 \end{thm}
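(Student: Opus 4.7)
The plan is to establish monogenicity by the standard three-step approach: prove irreducibility of $\FF_{n,A,B}$, compute its discriminant, and apply Dedekind's criterion at every rational prime whose square divides the discriminant. The key structural observation is that
\[
\FF_{n,A,B}(x) = g\bigl(x^{2^{n-2}}\bigr), \qquad g(y) = y^4 + Ay^3 + By^2 + Ay + 1,
\]
where $g(y)$ is a reciprocal quartic. The substitution $z = y + 1/y$ converts $g(y)/y^2$ into $z^2 + Az + (B-2)$ with discriminant exactly $W_3$, and the two induced quadratics $y^2 - z_\pm y + 1$ have discriminants whose product simplifies to $W_1 W_2$; hence $\Delta(g) = W_1 W_2 W_3^2$, and a standard formula for the discriminant of a composition gives
\[
\Delta(\FF_{n,A,B}) = \pm\, 2^{(n-2)2^n}\,\bigl(W_1 W_2 W_3^2\bigr)^{2^{n-2}}.
\]
Irreducibility of $g$ over $\Q$ follows from $g \equiv \Phi_5 \pmod 2$ (with $\Phi_5$ the $5$th cyclotomic, irreducible over $\F_2$); irreducibility of $\FF_{n,A,B}$ then reduces to a Capelli-type criterion applied to $g(x^{2^{n-2}})$.

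For odd primes, the squarefreeness of $W_1 W_2 W_3$ implies that any prime $p$ contributing a square to $\Delta(\FF_{n,A,B})$ divides exactly one of $W_1, W_2, W_3$. In each case $\bar g$ factors explicitly in $\F_p[y]$: $\bar g \equiv (y^2 + (A/2)y + 1)^2$ if $p \mid W_3$; $\bar g \equiv (y+1)^2(y^2 + (A-2)y + 1)$ if $p \mid W_1$; and $\bar g \equiv (y-1)^2(y^2 + (A+2)y + 1)$ if $p \mid W_2$. (The degenerate case $\bar g \equiv (y\pm 1)^4$ would force $p$ to divide $W_3$ along with $W_1$ or $W_2$, contradicting squarefreeness.) Lifting the displayed factorization to $H \in \Z[y]$ and setting $G := (g - H)/p$ (with the obvious modification in the $p \mid W_3$ case), a direct computation shows that the Dedekind cofactor $G(x^{2^{n-2}})$ is coprime to the repeated factor of $\bar{\FF}_{n,A,B}$ in $\F_p[x]$ precisely when $p^2 \nmid W_i$, and this holds by hypothesis.

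The main obstacle is the prime $p = 2$. Since $A, B$ are odd,
\[
\FF_{n,A,B}(x) \equiv \Phi_5(x)^{2^{n-2}} \pmod 2.
\]
Working with the lift $F(x) := \Phi_5\bigl(x^{2^{n-2}}\bigr)$---which agrees with $\Phi_5(x)^{2^{n-2}}$ mod $2$, and whose difference, divided by $2$, is congruent to $0$ modulo $\Phi_5$ in $\F_2[x]$, so the Dedekind criterion is unaffected by this choice---the cofactor becomes
\[
M(x) = \tfrac{A-1}{2}\,x^{3\cdot 2^{n-2}} + \tfrac{B-1}{2}\,x^{2^{n-1}} + \tfrac{A-1}{2}\,x^{2^{n-2}}.
\]
Dedekind's criterion at $2$ is then the requirement $\Phi_5(x) \nmid \bar M(x)$ in $\F_2[x]$. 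Whether $(A-1)/2$ and $(B-1)/2$ are even or odd is controlled by $(A \bmod 4, B \bmod 4)$: in each of the three cases $(1,3)$, $(3,1)$, $(3,3)$ the polynomial $\bar M$ is a specific nonzero expression, and coprimality with $\Phi_5$ is verified by evaluating at a primitive fifth root $\alpha \in \F_{16}$. Because $\alpha^5 = 1$ and $\ord_5(2) = 4$, the exponents $2^{n-2}$ and $2^{n-1}$ reduce mod $5$ to only four patterns indexed by $n \bmod 4$, and a short case check gives $\bar M(\alpha) \ne 0$ in every instance. In the excluded case $(1,1) \pmod 4$ both $(A-1)/2$ and $(B-1)/2$ are even, forcing $\bar M \equiv 0$ and making Dedekind fail with this lift---precisely the obstruction that motivates the more delicate treatment carried out in the present paper.
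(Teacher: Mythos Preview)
The paper does not itself prove this theorem; it is quoted from \cite{JonesBAMS} as background. There is thus no proof here to compare against directly, but your outline matches the toolkit the paper uses elsewhere (Dedekind's criterion as in Theorem~\ref{Thm:Dedekind}, the discriminant formula of Proposition~\ref{Eq:Delta(f)}, and the irreducibility criterion of Proposition~\ref{Prop:NG}), and is almost certainly close to the argument in \cite{JonesBAMS}.

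Your sketch is essentially correct. The $p=2$ analysis is the heart of the matter and you handle it cleanly: since $\Phi_5(x)\mid \Phi_5\bigl(x^{2^{n-2}}\bigr)$ already in $\Z[x]$, the choice $h_1h_2=\Phi_5\bigl(x^{2^{n-2}}\bigr)$ is a legitimate Dedekind lift (so your parenthetical justification, while true, is not even needed), the cofactor $M$ is as you write, and the evaluation at a primitive fifth root in $\F_{16}$ succeeds in each of the three residue classes. For odd $p$ your description is telegraphic but sound: taking, say when $p\mid W_1$, the lift $h_1h_2=(y+1)^2\bigl(y^2+(A-2)y+1\bigr)\big|_{y=x^{2^{n-2}}}$ gives cofactor $-(W_1/p)\,x^{2^{n-1}}$, visibly coprime to $x^{2^{n-2}}+1$ modulo $p$. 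One should record that the squarefreeness of $W_1W_2W_3$ forces $p$ to divide exactly one $W_i$, which is what guarantees that the non-repeated part of $\bar g$ stays separable and coprime to the repeated factor after the substitution $y=x^{2^{n-2}}$.

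The one place that needs more than a pointer is irreducibility. Invoking a Capelli-type criterion is the right move, but the verification is not vacuous: in fact $\Phi_5(x)=(x^2+x+1)^2-x(x+1)^2$ holds over $\Z$, so condition~\eqref{Eq:NGC1} of Proposition~\ref{Prop:NG} is genuinely satisfiable for some $(A,B)$ with $A\equiv B\equiv 1\pmod 2$ (indeed $A=B=1$, which is why $\FF_{3,1,1}$ is reducible). Ruling it out under your hypotheses requires solving the Diophantine system coming from $g(x)=S_0(x)^2-xS_1(x)^2$ and checking that every integer solution forces either one of the $W_i$ to be a nontrivial square or $(A,B)$ to fall outside the three allowed residue classes; condition~\eqref{Eq:NGC2} on $g(x^2)$ needs a similar treatment. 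That computation, not the Dedekind step, is where the substantive work in \cite{JonesBAMS} lies, and it is the piece your proposal leaves open.
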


  It is the goal of this article to extend Theorem \ref{Thm:BAMS} by providing an investigation of the monogenicity of $\FF_{n,A,B}(x)$ in the specific previously-unaddressed situation of $A\equiv B\equiv 1\pmod{4}$. Moreover, we determine the Galois group of $\FF_{n,A,B}(x)$ in some special cases.
  More precisely, we prove
\begin{thm}\label{Thm:Main1}
  Assuming the notation of \eqref{Eq:Defs}, suppose that $A\equiv B\equiv 1 \pmod{4}$. Then
\begin{enumerate}
\item \label{Main1 I1} $\FF_{2,A,B}(x)$ is irreducible over $\Q$ and
\[\Gal(\FF_{2,A,B})\simeq \left\{\begin{array}{l}
  C_4 \quad  \mbox{if and only if} \quad W_1W_2W_3 \ \mbox{is a square,}\\[.5em]
  D_4 \quad \mbox{if and only if} \quad  W_1W_2W_3 \ \mbox{is not a square.}
\end{array}\right.\]
\item \label{Main1 I2} $\FF_{2,A,B}(x)$ is monogenic if and only if $W_1$, $W_2$ and $W_3$ are squarefree.
\item \label{Main1 I3} There exist infinitely many pairs $(A,B)$ such that $\FF_{2,A,B}(x)$ is monogenic with $\Gal(\FF_{2,A,B})\simeq D_4$. Furthermore, for any two such pairs $(A_i,B_i)\ne (A_j,B_j)$ where $\FF_{2,A_i,B_i}(\theta_i)=\FF_{2,A_j,B_j}(\theta_j)=0$, we have that $\Q(\theta_i)\ne \Q(\theta_j)$.
\item \label{Main1 I4} $\FF_{2,A,B}(x)$ is monogenic with $\Gal(\FF_{2,A,B})\simeq C_4$ if and only if $A=B=1$.
\item \label{Main1 I5} $\FF_{3,A,B}(x)$ is reducible over $\Q$ if and only if
\begin{align*}
A&=4t-4s^2-4s+1 \quad \mbox{and}\\
B&\in \{4t^2+4t-8s^2-8s+1, \ 4t^2+4t+8s^2+8s+5\},
\end{align*}
for some $s,t\in \Z$.
\item \label{Main1 I6} There exist infinitely many values of $A$ such that $\Gal(\FF_{3,A,A})$ is isomorphic to the wreath product $C_2^2\wr C_2$.
\item \label{Main1 I7}  When $n\ge 3$, $\FF_{n,A,A}(x)$ is irreducible over $\Q$ if and only if $A\ne 1$.
\item \label{Main1 I8} $\FF_{n,A,A}(x)$ is never monogenic when $n\ge 3$.
\end{enumerate}
\end{thm}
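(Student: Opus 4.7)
\medskip
\noindent\textbf{Proof proposal for item \eqref{Main1 I8}.} The plan is to split into two cases based on whether $A=1$. When $A=1$ the polynomial is reducible by part \eqref{Main1 I7}, hence not monogenic by definition. For the remaining case $A\equiv 1\pmod 4$ with $A\ne 1$, part \eqref{Main1 I7} gives irreducibility, and I will exhibit the prime $2$ as a common index divisor via Dedekind's criterion applied to $\FF_{n,A,A}(x)$ at $p=2$.

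The first step is the reduction modulo $2$. Because $A$ is odd,
\[
\FF_{n,A,A}(x)\equiv (x^{2^{n-2}})^4+(x^{2^{n-2}})^3+(x^{2^{n-2}})^2+x^{2^{n-2}}+1\pmod 2,
\]
and writing $g(y)=y^4+y^3+y^2+y+1=\Phi_5(y)$, the Frobenius identity in $\F_2[x]$ gives $\FF_{n,A,A}(x)\equiv g(x)^{2^{n-2}}\pmod 2$. The polynomial $g(x)$ is irreducible over $\F_2$ since the order of $2$ modulo $5$ equals $4$. For $n\ge 3$ the exponent $2^{n-2}\ge 2$, so Dedekind's criterion reduces the nonmonogenicity of $\FF_{n,A,A}(x)$ at $p=2$ to the single divisibility $g(x)\mid M(x)$ in $\F_2[x]$, where
\[
M(x)=\frac{\FF_{n,A,A}(x)-g(x)^{2^{n-2}}}{2}.
\]

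The second step is to verify this divisibility, which is where the real work lies. I split the numerator as
\[
\FF_{n,A,A}(x)-g(x)^{2^{n-2}}=\bigl[\FF_{n,A,A}(x)-g(x^{2^{n-2}})\bigr]+\bigl[g(x^{2^{n-2}})-g(x)^{2^{n-2}}\bigr].
\]
The first bracket equals $(A-1)(x^{3\cdot 2^{n-2}}+x^{2^{n-1}}+x^{2^{n-2}})$; since $A\equiv 1\pmod 4$, this is divisible by $4$, so it contributes $0$ to $M(x)$ modulo $2$. For the second bracket, I work in $\Z/4\Z[x]$ and prove by induction on $k\ge 1$ that
\[
g(x)^{2^k}\equiv g(x^{2^k})+2\,x^{2^{k-1}}(x^{2^{k-1}}+1)^2 g(x^{2^{k-1}})\pmod 4.
\]
The base case $k=1$ is a direct multiplication yielding $(g(x^2)-g(x)^2)/2\equiv x(x+1)^2 g(x)\pmod 2$. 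The inductive step follows from squaring the previous congruence (so the $2v_{k-1}$ contribution is killed mod $4$) and re-applying the base identity with $x$ replaced by $x^{2^{k-1}}$. Setting $k=n-2$ and reducing modulo $2$ using $(x^{2^{k-1}}+1)^2=(x+1)^{2^k}$ and $g(x^{2^{k-1}})=g(x)^{2^{k-1}}$, I obtain
\[
M(x)\equiv x^{2^{n-3}}(x+1)^{2^{n-2}} g(x)^{2^{n-3}}\pmod 2,
\]
which is clearly divisible by $g(x)$ since $2^{n-3}\ge 1$.

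The main obstacle is the second bracket, i.e., controlling the error term $g(x^{2^{n-2}})-g(x)^{2^{n-2}}$ modulo $4$ uniformly in $n$; the bookkeeping is clean once one notices the $\Z/4\Z[x]$ identity $(a+2b)^2\equiv a^2\pmod 4$, which lets the induction collapse to a single application of the base identity at each stage. Granted the divisibility, Dedekind's criterion at $p=2$ yields $2\mid [\Z_K:\Z[\theta]]$, whence by \eqref{Eq:Dis-Dis} the polynomial $\FF_{n,A,A}(x)$ cannot be monogenic, completing both cases.
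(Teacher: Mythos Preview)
Your argument is correct. The difference from the paper's proof is structural: the paper applies Dedekind's criterion at $p=2$ only to $\FF_{3,A,A}(x)$, obtaining exactly your $n=3$ factorization $\overline{F}(x)=x(x+1)^2\Phi_5(x)$, and then invokes a separate lemma (Lemma~\ref{Lem:Inductivemonogenicity}: if $\FF_{n,A,B}$ is monogenic then so is $\FF_{n-1,A,B}$) to propagate nonmonogenicity from $n=3$ to all $n\ge 3$. You instead run Dedekind at $p=2$ directly for every $n\ge 3$, which forces you to control $g(x^{2^{n-2}})-g(x)^{2^{n-2}}$ modulo~$4$ uniformly in $n$; your $\Z/4\Z[x]$ induction handles this cleanly and recovers the paper's computation as the base case $k=1$. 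The paper's route is shorter and isolates a reusable reduction lemma, while yours is self-contained and actually yields the explicit factorization $\overline{M}(x)=x^{2^{n-3}}(x+1)^{2^{n-2}}g(x)^{2^{n-3}}$ for every $n$. One terminological quibble: you are not exhibiting $2$ as a \emph{common} index divisor (which would require $2\mid[\Z_K:\Z[\alpha]]$ for every generator $\alpha$), only that $2$ divides the index for this particular $\theta$; but that is all that is needed here.
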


\section{Preliminaries}\label{Section:Prelim}
The first result will be useful in the proof of item \eqref{Main1 I4} of Theorem \ref{Thm:Main1}.
\begin{prop}{\rm \cite{Koshy}}\label{Prop:LF}
  For  $N\in \Z$, let ${\mathfrak L}_N$ and ${\mathfrak F}_N$ denote, respectively, the $N$th Lucas and $N$th Fibonacci numbers, where ${\mathfrak L}_0=2$ and ${\mathfrak F}_0=0$. Then
  \begin{enumerate}
  \item \label{LF0} ${\mathfrak L}_{-N}=(-1)^N{\mathfrak L}_{N}$,
    \item \label{LF1} $5{\mathfrak F}_N=2{\mathfrak L}_{N+1}-{\mathfrak L}_N$.
    \item \label{LF2} ${\mathfrak L}_{2N}+(-1)^N2={\mathfrak L}_N^2$.
  \end{enumerate}
\end{prop}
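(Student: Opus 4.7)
The plan is to derive all three identities uniformly from Binet's formulas. Set $\phi = (1+\sqrt{5})/2$ and $\psi = (1-\sqrt{5})/2$, so that ${\mathfrak F}_N = (\phi^N - \psi^N)/\sqrt{5}$ and ${\mathfrak L}_N = \phi^N + \psi^N$ hold for every $N \in \Z$ (the extension to negative indices being achieved by declaring these formulas, which is consistent with the standard two-sided recurrences). Throughout I will use the elementary facts $\phi\psi = -1$, $\phi - \psi = \sqrt{5}$, and consequently $2\phi - 1 = \sqrt{5}$, $2\psi - 1 = -\sqrt{5}$.

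For item \eqref{LF0}, the relation $\phi\psi = -1$ gives $\phi^{-1} = -\psi$ and $\psi^{-1} = -\phi$, whence ${\mathfrak L}_{-N} = \phi^{-N} + \psi^{-N} = (-\psi)^N + (-\phi)^N = (-1)^N{\mathfrak L}_N$. For item \eqref{LF1}, I would expand $2{\mathfrak L}_{N+1} - {\mathfrak L}_N = \phi^N(2\phi - 1) + \psi^N(2\psi - 1)$ and use $2\phi-1 = \sqrt{5}$, $2\psi - 1 = -\sqrt{5}$ to collapse this to $\sqrt{5}\,(\phi^N - \psi^N) = 5{\mathfrak F}_N$. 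For item \eqref{LF2}, squaring the Binet expression gives ${\mathfrak L}_N^2 = \phi^{2N} + \psi^{2N} + 2(\phi\psi)^N = {\mathfrak L}_{2N} + 2(-1)^N$, which rearranges to the claim.

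There is essentially no obstacle: since Binet's formulas are valid for all $N \in \Z$, no case split on the sign of $N$ is required, and each identity reduces to a single-line algebraic manipulation involving the minimal polynomial $x^2 - x - 1$ of $\phi$ and $\psi$. An alternative, self-contained route would be a straightforward induction on $|N|$ from the two-sided recurrences ${\mathfrak L}_{N+1} = {\mathfrak L}_N + {\mathfrak L}_{N-1}$ and ${\mathfrak F}_{N+1} = {\mathfrak F}_N + {\mathfrak F}_{N-1}$, checking each identity at $N = 0, 1$ and then invoking the recurrence, but this is noticeably more bookkeeping than the closed-form derivation above.
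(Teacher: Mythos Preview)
Your proof is correct: the Binet-formula derivation handles all three identities cleanly and works for all $N\in\Z$ as you note. The paper itself does not supply a proof of this proposition---it is stated with a citation to Koshy \cite{Koshy}---so there is no in-paper argument to compare against.
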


The proof of the next proposition can be found in \cite{JonesBAMS}.
\begin{prop}\label{Eq:Delta(f)}
  $\Delta(\FF_{n,A,B})=2^{2^n(n-2)}\left(W_1W_2W_3^2\right)^{2^{n-2}}$.
\end{prop}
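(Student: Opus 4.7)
The plan is to factor the problem through the quartic. Writing $m = 2^{n-2}$, one observes that $\FF_{n,A,B}(x) = g(x^m)$ for $g(y) = y^4 + Ay^3 + By^2 + Ay + 1$. I would then compute $\Delta(\FF_{n,A,B})$ in terms of $\Delta(g)$ via the composition $x \mapsto x^m$, and separately compute $\Delta(g)$ by exploiting the reciprocal structure of $g$.

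For the composition step, differentiate to get $\FF_{n,A,B}'(x) = mx^{m-1}g'(x^m)$. If the roots of $g$ are $\alpha_1,\dots,\alpha_4$, the $4m$ roots of $\FF_{n,A,B}$ are the $\beta_{i,j}$ with $\beta_{i,j}^m = \alpha_i$, and the standard identity $\Delta(\FF_{n,A,B}) = (-1)^{\binom{4m}{2}}\prod_\beta \FF_{n,A,B}'(\beta)$ expands to
\[
\Delta(\FF_{n,A,B}) = (-1)^{\binom{4m}{2}}\, m^{4m}\left(\prod_{i,j}\beta_{i,j}^{m-1}\right)\Delta(g)^m,
\]
using $\prod_i g'(\alpha_i) = \Delta(g)$ for monic $g$ of degree $4$. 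The inner product over roots of unity collapses via $\prod_j \beta_{i,j} = \pm \alpha_i$ together with $\prod_i \alpha_i = g(0) = 1$, while the sign $(-1)^{\binom{4m}{2}}$ equals $+1$ because $\binom{4m}{2} = 2m(4m-1)$ is always even. This yields $\Delta(\FF_{n,A,B}) = m^{4m}\Delta(g)^m = 2^{2^n(n-2)}\Delta(g)^{2^{n-2}}$.

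For the quartic discriminant, the reciprocal substitution $z = y + 1/y$ gives $g(y)/y^2 = z^2 + Az + (B-2)$, whose roots $z_1, z_2$ satisfy $z_1+z_2=-A$ and $z_1 z_2 = B-2$. The roots of $g$ then split into reciprocal pairs $\alpha_k^{\pm}$ with $\alpha_k^++\alpha_k^-=z_k$ and $\alpha_k^+\alpha_k^-=1$, so $(\alpha_k^+-\alpha_k^-)^2=z_k^2-4$, and a short expansion gives $(z_1^2-4)(z_2^2-4) = (B+2)^2 - 4A^2 = W_1W_2$. For the four cross differences, I would set $u = \alpha_1^+\alpha_2^- + \alpha_1^-\alpha_2^+$ and $v = \alpha_1^+\alpha_2^+ + \alpha_1^-\alpha_2^-$, so that $(\alpha_1^+-\alpha_2^+)(\alpha_1^--\alpha_2^-) = 2-u$ and $(\alpha_1^+-\alpha_2^-)(\alpha_1^--\alpha_2^+) = 2-v$, with $u+v = z_1z_2 = B-2$ and $uv = z_1^2+z_2^2-4 = A^2-2B$; thus $(2-u)(2-v) = A^2-4B+8 = W_3$. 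Assembling the six squared pairwise differences yields $\Delta(g)=W_1W_2W_3^2$, and combining with the previous step produces the claimed formula.

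The main bookkeeping hazards lie in the composition step: one must correctly handle the sign $(-1)^{\binom{4m}{2}}$ and the roots-of-unity product $\prod_{i,j}\beta_{i,j}^{m-1}$; it is a pleasant cancellation that both contribute $+1$. In the quartic step, the key structural observation is that the symmetric combination $(2-u)(2-v)$ of the mixed quadratic functions of the reciprocal pairs is exactly $W_3$, so that it appears squared in $\Delta(g)$, matching the shape demanded by the proposition.
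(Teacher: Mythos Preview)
Your argument is correct. The composition step is handled cleanly: with $d=4$ one has $(-1)^{\binom{4}{2}}=1$, so $\prod_i g'(\alpha_i)=\Delta(g)$; the product $\prod_{i,j}\beta_{i,j}^{m-1}$ indeed collapses to $1$ because $\prod_i\alpha_i=g(0)=1$ and the fourth power kills the sign $(-1)^{m+1}$ coming from each $m$-th-root packet; and $\binom{4m}{2}=2m(4m-1)$ is even. The quartic step is also correct: the identities $u+v=B-2$, $uv=A^2-2B$ follow from the reciprocal pairing as you wrote, giving $(2-u)(2-v)=W_3$ and hence $\Delta(g)=W_1W_2W_3^2$.

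As for comparison: the paper does not supply a proof of this proposition at all, but simply cites \cite{JonesBAMS}. Your self-contained derivation therefore goes beyond what the present paper offers. The decomposition you chose---first reducing to $\Delta(g)$ via the $x\mapsto x^m$ composition formula, then computing $\Delta(g)$ through the reciprocal substitution $z=y+1/y$---is the natural structural approach and makes transparent \emph{why} the factor $W_3$ enters squared while $W_1W_2$ enters linearly: the four ``cross'' differences between the two reciprocal pairs group into two products $(2-u)$ and $(2-v)$ whose product is $W_3$, and it is the square of this block that appears in $\Delta(g)$.
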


The following proposition, which follows from a generalization of a theorem of Capelli, is a special case of the results in \cite{NG}, and gives simple necessary and sufficient conditions for the irreducibility of polynomials of the form $w(x^{2^k})\in \Z[x]$, when $w(x)$ is monic and irreducible.
\begin{prop}{\rm \cite{NG}}\label{Prop:NG}
  Let $w(x)\in \Z[x]$ be monic and irreducible, with $\deg(w)=m$. Then $w\left(x^{2^k}\right)$ is reducible if and only if there exist $S_0(x),S_1(x)\in \Z[x]$ such that either
  \begin{equation}\label{Eq:NGC1}
  (-1)^mw(x)=\left(S_0(x)\right)^2-x\left(S_1(x)\right)^2,
  \end{equation}
  or
  \begin{equation}\label{Eq:NGC2}
  k\ge 2 \mbox{ and } w\left(x^2\right)=\left(S_0(x)\right)^2-x\left(S_1(x)\right)^2.
  \end{equation}
\end{prop}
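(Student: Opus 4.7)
The plan is to reduce the statement to the classical Capelli--Vahlen criterion for the reducibility of binomials $x^n - a$. Because $w(x)$ is irreducible with a root $\alpha$, we have $w\bigl(x^{2^k}\bigr) = \prod_{\sigma}\bigl(x^{2^k} - \sigma(\alpha)\bigr)$ in $\overline{\Q}[x]$, so $w(x^{2^k})$ is reducible over $\Q$ if and only if $x^{2^k} - \alpha$ is reducible over $\Q(\alpha)$. For $n = 2^k$ the Capelli--Vahlen theorem asserts that this occurs precisely when either (i) $\alpha$ is a square in $\Q(\alpha)$, or (ii) $k \geq 2$ and $\alpha = -4\gamma^4$ for some $\gamma \in \Q(\alpha)$. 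The heart of the proof is to translate these two field-theoretic conditions into the two asserted polynomial identities.

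For (i), since $\sqrt{\alpha} \in \Q(\alpha)$ is an algebraic integer (a root of $X^2 - \alpha$ over $\Z[\alpha]$), its monic minimal polynomial $M(x) \in \Z[x]$ has degree $m$, and because $w(x^2)$ has roots $\{\pm\sqrt{\alpha_i}\}$ in $\overline{\Q}$, one checks that $w(x^2) = (-1)^m M(x) M(-x)$. Decomposing $M(x) = S_0(x^2) + x S_1(x^2)$ into even and odd parts with $S_0, S_1 \in \Z[x]$, the product collapses to $M(x)M(-x) = S_0(x^2)^2 - x^2 S_1(x^2)^2$, and the substitution $y = x^2$ yields $(-1)^m w(y) = S_0(y)^2 - y S_1(y)^2$. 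For (ii), I may assume $\sqrt{\alpha} \notin \Q(\alpha)$ (otherwise we are already in case (i)), so that $w(x^2)$ is itself irreducible of degree $2m$. Writing $\sqrt{\alpha} = 2i\gamma^2$ for a suitable choice of sign, the identity $((1+i)\gamma)^2 = 2i\gamma^2 = \sqrt{\alpha}$ combined with $i = \sqrt{\alpha}/(2\gamma^2) \in \Q(\sqrt{\alpha})$ shows that $\sqrt{\alpha}$ is a square in $\Q(\sqrt{\alpha})$. Applying the case-(i) argument to the irreducible polynomial $w(x^2)$ then produces $w(y^2) = T_0(y)^2 - y T_1(y)^2$ with $T_0, T_1 \in \Z[x]$.

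For the converse direction, each identity in turn furnishes a factorization of $w(x^{2^k})$ in $\Z[x]$: from $(-1)^m w(x) = S_0^2 - x S_1^2$ the substitution $x \mapsto x^{2^{k-1}}$ yields a difference of squares that factors for any $k \geq 1$, while from $w(x^2) = T_0^2 - x T_1^2$ the same substitution works provided $k \geq 2$ (so that $x^{2^{k-1}}$ itself is a perfect square), exactly matching the restriction in the proposition. The main subtlety I anticipate is verifying integrality: the $\Q[x]$-coefficient versions of $S_0, S_1, T_0, T_1$ come out of the algebra directly, but one needs $\sqrt{\alpha}$ in case (i) and $(1+i)\gamma$ in case (ii) to be algebraic integers so that the relevant minimal polynomials lie in $\Z[x]$ and the even--odd decomposition preserves integer coefficients. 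Monicity of $w$ makes $\alpha$ integral, whence $\sqrt{\alpha}$ is integral over $\Z[\alpha]$, and $(1+i)\gamma$ is integral as a square root of the integral element $\sqrt{\alpha}$; checking that the two cases together exhaust the Capelli--Vahlen conditions for $n = 2^k$ then completes the proof.
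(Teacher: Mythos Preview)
The paper does not prove this proposition at all: it is stated with a citation to \cite{NG} (Guersenzvaig) and introduced as ``a special case of the results in \cite{NG}'' that ``follows from a generalization of a theorem of Capelli.'' Your proposal, which reduces the question to the Capelli--Vahlen criterion for $x^{2^k}-\alpha$ over $\Q(\alpha)$ and then translates the two Capelli--Vahlen alternatives into the two polynomial identities via the even/odd decomposition of a suitable minimal polynomial, is exactly the kind of argument the paper is alluding to, and it is correct.

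A couple of minor points worth tightening. In case~(i) you should note explicitly that $M(x)$ and $M(-x)$ are coprime: otherwise they would coincide (both being monic irreducible up to sign), forcing $M$ to be an even polynomial, hence $M(x)=N(x^2)$ for some $N$ of degree $m/2$ with $N(\alpha)=0$, contradicting $\deg w=m$. This ensures $M(x)M(-x)$ has $2m$ distinct roots, so it really equals $(-1)^m w(x^2)$. In case~(ii), when you invoke case~(i) for $w(x^2)$ with $\delta=(1+i)\gamma$, you implicitly use that $\Q(\delta)=\Q(\sqrt{\alpha})$; this holds because $\sqrt{\alpha}=\delta^2\in\Q(\delta)$ while $\delta\in\Q(\sqrt{\alpha})$ by construction, so the minimal polynomial of $\delta$ has the required degree $2m$. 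Finally, in the converse you should remark that $S_1\ne 0$ (respectively $T_1\ne 0$): if $S_1=0$ then $(-1)^m w$ would be a perfect square in $\Z[x]$, impossible for an irreducible $w$ of positive degree, and similarly $T_1=0$ would force $w(x^2)$ to be a square. With these small additions your argument is complete.
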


The following theorem, known as \emph{Dedekind's Index Criterion}, or simply \emph{Dedekind's Criterion} if the context is clear, is a standard tool used in determining the monogenicity of a polynomial.
\begin{thm}[Dedekind \cite{Cohen}]\label{Thm:Dedekind}
Let $K=\Q(\theta)$ be a number field, $T(x)\in \Z[x]$ the monic minimal polynomial of $\theta$, and $\Z_K$ the ring of integers of $K$. Let $q$ be a prime number and let $\overline{ * }$ denote reduction of $*$ modulo $q$ (in $\Z$, $\Z[x]$ or $\Z[\theta]$). Let
\[\overline{T}(x)=\prod_{i=1}^k\overline{\tau_i}(x)^{e_i}\]
be the factorization of $T(x)$ modulo $q$ in $\F_q[x]$, and set
\[h_1(x)=\prod_{i=1}^k\tau_i(x),\]
where the $\tau_i(x)\in \Z[x]$ are arbitrary monic lifts of the $\overline{\tau_i}(x)$. Let $h_2(x)\in \Z[x]$ be a monic lift of $\overline{T}(x)/\overline{h_1}(x)$ and set
\[F(x)=\dfrac{h_1(x)h_2(x)-T(x)}{q}\in \Z[x].\]
Then
\[\left[\Z_K:\Z[\theta]\right]\not \equiv 0 \pmod{q} \Longleftrightarrow \gcd\left(\overline{F},\overline{h_1},\overline{h_2}\right)=1 \mbox{ in } \F_q[x].\]
\end{thm}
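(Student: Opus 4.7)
The plan is to prove Dedekind's Index Criterion by reducing to a local question about whether $\Z[\theta]$ is $q$-maximal inside $\Z_K$, and then translating this into the stated polynomial gcd condition. By the structure theorem for finite abelian groups applied to $\Z_K/\Z[\theta]$, we have $q \nmid [\Z_K : \Z[\theta]]$ if and only if the $q$-torsion of $\Z_K/\Z[\theta]$ is trivial, if and only if $\Z[\theta]$ agrees with $\Z_K$ after localizing at $q$; I would take this equivalence as the starting point.

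Next I would describe the primes of $\Z[\theta]$ above $q$. Via the isomorphism $\Z[\theta]/q\Z[\theta] \simeq \F_q[x]/\overline{T}(x)$ and the factorization $\overline{T} = \prod_{i=1}^k \overline{\tau_i}^{e_i}$, the maximal ideals of $\Z[\theta]$ containing $q$ are exactly $\mathfrak{P}_i = (q, \tau_i(\theta))$, and the Chinese remainder decomposition gives $\Z[\theta]/q\Z[\theta] \simeq \prod_i \F_q[x]/\overline{\tau_i}^{e_i}$. I would then observe that $\Z[\theta]$ is $q$-maximal if and only if, for each $i$, the localization $\Z[\theta]_{\mathfrak{P}_i}$ is already a discrete valuation ring. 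When $e_i = 1$, this is automatic because the corresponding factor $\F_q[x]/\overline{\tau_i}$ is a field; only the indices $i$ with $e_i \geq 2$ can obstruct $q$-maximality.

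The central identity is the defining relation $qF = h_1 h_2 - T$, which evaluated at $\theta$ gives $h_1(\theta)h_2(\theta) = qF(\theta)$ in $\Z[\theta]$. The main technical step is to show that $\Z[\theta]_{\mathfrak{P}_i}$ is a DVR (for $e_i \geq 2$) if and only if $\overline{\tau_i} \nmid \overline{F}$. For the easier direction, assuming the gcd is trivial, I would use a Bezout relation $\overline{a}\,\overline{F} + \overline{b}\,\overline{\tau_i} = 1$ in $\F_q[x]$, lift it, and combine with the central identity to produce a uniformizer of $\mathfrak{P}_i$ built from the factors of $h_1(\theta)$ and $h_2(\theta)$, thereby proving the localization is a DVR. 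For the converse, when $\overline{\tau_j}$ divides all three of $\overline{F}$, $\overline{h_1}$, $\overline{h_2}$, I would construct an explicit element of the form $\alpha = \tau_j(\theta)^{e_j-1}\,v(\theta)/q$ (with $v \in \Z[x]$ a suitable lift) and verify that $\alpha \in \Z_K \setminus \Z[\theta]$, forcing $q \mid [\Z_K:\Z[\theta]]$.

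The hard part will be the second direction: one must verify that the constructed $\alpha$ is actually integral over $\Z$, which calls for either a clean ideal-theoretic argument via Kummer--Dedekind applied to the relation $\mathfrak{P}_j^{e_j} \subseteq q\Z[\theta]$, or a direct computation of a monic annihilator. Equally delicate is translating the condition $\gcd(\overline{F},\overline{h_1},\overline{h_2}) \neq 1$ cleanly into ``there exists $j$ with $e_j \geq 2$ and $\overline{\tau_j} \mid \overline{F}$'' --- this is where the specific shape of $h_2$ as a lift of $\overline{T}/\overline{h_1} = \prod \overline{\tau_i}^{e_i - 1}$ carries all the weight, and I would need to check carefully that the resulting criterion is independent of the choices of lifts $\tau_i$ and $h_2$.
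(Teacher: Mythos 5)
The paper does not actually prove this statement: Theorem \ref{Thm:Dedekind} is quoted as a classical result (Dedekind's index criterion) with a citation to Cohen, so there is no internal proof to compare yours against. Your outline is the standard localization proof of the Kummer--Dedekind criterion, and its skeleton is correct: the reduction to $q$-maximality via the $q$-torsion of $\Z_K/\Z[\theta]$, the identification of the maximal ideals $\mathfrak{P}_i=(q,\tau_i(\theta))$ above $q$, the observation that only indices with $e_i\ge 2$ can obstruct, and the translation of $\gcd(\overline{F},\overline{h_1},\overline{h_2})\ne 1$ into ``there is some $j$ with $e_j\ge 2$ and $\overline{\tau_j}\mid\overline{F}$'' (immediate, since $\gcd(\overline{h_1},\overline{h_2})=\prod_{e_i\ge 2}\overline{\tau_i}$) are all sound.

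That said, as written this is a plan rather than a proof: the two steps you defer are precisely the content of the theorem, and you should check that they close. They do. For the direction you call easier, no B\'ezout relation is needed: if $\overline{\tau_i}\nmid\overline{F}$ then $F(\theta)\notin\mathfrak{P}_i$, so $F(\theta)$ is a unit in $\Z[\theta]_{\mathfrak{P}_i}$ and the identity $h_1(\theta)h_2(\theta)=qF(\theta)$ gives $q\in\tau_i(\theta)\Z[\theta]_{\mathfrak{P}_i}$; hence the maximal ideal is principal, generated by $\tau_i(\theta)$, and the localization is a DVR. For the converse, take $v\in\Z[x]$ a monic lift of $\overline{T}/\overline{\tau_j}^{\,e_j}$ and set $\alpha=\tau_j(\theta)^{e_j-1}v(\theta)/q$. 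Writing $\tau_j^{e_j}v-T=qW$, a short computation (decomposing $h_1=\tau_j u_1$ and $h_2=\tau_j^{e_j-1}u_2+qs$) shows $\overline{W}\equiv\overline{F}\pmod{\overline{\tau_j}}$, so the hypothesis $\overline{\tau_j}\mid\overline{F}$ together with $e_j\ge 2$ yields $\alpha\cdot q=\tau_j(\theta)^{e_j-1}v(\theta)\in\mathfrak{P}_j$ and $\alpha\cdot\tau_j(\theta)=W(\theta)\in\mathfrak{P}_j$; thus $\alpha\mathfrak{P}_j\subseteq\mathfrak{P}_j$, and the determinant trick on the finitely generated faithful module $\mathfrak{P}_j$ shows $\alpha\in\Z_K$, while $\alpha\notin\Z[\theta]$ because $\tau_j^{e_j-1}v$ is monic of degree strictly less than $\deg T$. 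This same computation settles the lift-independence worry you raise, since the divisibility $\overline{\tau_j}\mid\overline{F}$ is unchanged by any admissible choice of $\tau_i$ and $h_2$. So your approach is viable and is essentially the textbook proof; it just needs these verifications written out to be complete.
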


\begin{thm}\label{Thm:Pasten}
   Let $G(t)\in \Z[t]$, and suppose that $G(t)$ factors into a product of distinct non-constant polynomials $\gamma_i(t)\in \Z[x]$ that are irreducible over $\Z$, such that the degree of each $\gamma_i(t)$ is at most 3.   Define
   \[N_G\left(X\right)=\abs{\left\{p\le X : p \mbox{ is prime and } G(p) \mbox{ is squarefree}\right\}}.\]
      Then,
   \begin{equation}\label{Eq:NG}
   N_G(X)\sim C_G\dfrac{X}{\log(X)},
   \end{equation}
   where
   \begin{equation}\label{Eq:CG}
   C_G=\prod_{\ell  \mbox{ \rm {\tiny prime}}}\left(1-\dfrac{\rho_G\left(\ell^2\right)}{\ell(\ell-1)}\right)
   \end{equation}
   and $\rho_G\left(\ell^2\right)$ is the number of $z\in \left(\Z/\ell^2\Z\right)^{*}$ such that $G(z)\equiv 0 \pmod{\ell^2}$.
 \end{thm}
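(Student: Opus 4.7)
The plan is to carry out a squarefree sieve on the sequence $\{G(p) : p \le X \text{ prime}\}$, separating the contribution of small, intermediate, and large prime-square divisors. Expanding the indicator that $G(p)$ is squarefree by M\"obius inversion gives
\[
N_G(X) = \sum_{d \ge 1} \mu(d)\, M(X,d), \qquad \text{where } M(X,d) = \abs{\{p \le X : d^2 \mid G(p)\}}.
\]
Extending $\rho_G$ multiplicatively to squarefree arguments, each $M(X,d)$ counts primes lying in one of $\rho_G(d^2)$ residue classes modulo $d^2$ that are coprime to $d$. The entire problem reduces to controlling $M(X,d)$ in three ranges and identifying the main term.

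In the low range $d \le (\log X)^A$ for $A$ large, Siegel--Walfisz applied to each admissible residue class modulo $d^2$ yields
\[
M(X,d) = \frac{\rho_G(d^2)}{\varphi(d^2)}\, \pi(X) + O\left(X \exp(-c\sqrt{\log X})\right),
\]
and summing $\mu(d) M(X,d)$ over this range, together with $\varphi(\ell^2) = \ell(\ell - 1)$, recovers the main term $C_G\, X/\log X$ after recognizing the Euler product. In the intermediate range $(\log X)^A < d \le X^{1/2}/(\log X)^B$, the Brun--Titchmarsh inequality gives $M(X,d) \ll \rho_G(d^2) X / (\varphi(d^2) \log(X/d^2))$, and the uniform bound $\rho_G(\ell^2) = O(1)$ for $\ell$ outside a finite exceptional set makes the tail negligible.

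The main obstacle is the high range $d > X^{1/2}/(\log X)^B$, where inclusion--exclusion loses too much and one must handle each irreducible factor $\gamma$ of $G$ individually. Linear factors are trivial since $\ell^2 \mid \gamma(p)$ pins $p$ to a single residue class modulo $\ell^2$, and quadratic factors follow from elementary size considerations since $|\gamma(p)| \ll X^2$ forces $\ell \ll X$. The genuinely difficult case is a cubic $\gamma$, for which one needs
\[
\sum_{\ell > X^{1/2}/(\log X)^B} \abs{\{p \le X : \ell^2 \mid \gamma(p)\}} = o\left(X/\log X\right).
\]
This is Hooley's cubic squarefree sieve, adapted to a prime variable; the proof rests on bounding lattice points on an auxiliary curve via a determinant-method or square-sieve argument. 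The hypothesis $\deg \gamma_i \le 3$ is precisely the threshold at which this estimate is known unconditionally, which is why the theorem is framed this way. Assembling the three ranges and collecting error terms yields the asymptotic $N_G(X) \sim C_G\, X/\log X$ with the Euler product as stated.
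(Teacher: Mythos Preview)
The paper does not give its own proof of this theorem: it is quoted as a known result, with the remark immediately following it stating that ``Theorem~\ref{Thm:Pasten} follows from work of Helfgott, Hooley and Pasten'' and pointing to \cite{Helfgott-cubic,Hooley-book,Pasten} and the discussion in \cite{JonesNYJM}. So there is no in-paper argument to compare your sketch against; the paper simply imports the statement from the literature.

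That said, your outline is a faithful summary of the standard squarefree-sieve architecture that those cited works implement: M\"obius expansion, Siegel--Walfisz for small moduli, Brun--Titchmarsh for the middle range, and a separate large-prime-square estimate for each irreducible factor. You correctly identify that the cubic case over a prime variable is the bottleneck and that this is exactly the content of Helfgott's theorem \cite{Helfgott-cubic} (Hooley's original argument being for an integer variable). In effect your ``proof'' reduces the theorem to the same deep input the paper cites, so it is less an alternative argument than a road map to the existing one. One minor caution: your handling of the quadratic-factor contribution in the high range (``$|\gamma(p)|\ll X^2$ forces $\ell\ll X$'') is too brief to be a proof---bounding $\ell$ by $X$ alone does not give $o(X/\log X)$ pairs $(p,\ell)$, and one still needs a short nontrivial counting step there---but this is well within the scope of the cited references.
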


\begin{rem}
  Theorem \ref{Thm:Pasten} follows from work of Helfgott, Hooley and Pasten \cite{Helfgott-cubic,Hooley-book,Pasten}. For more details, see the discussion following \cite[Theorem 2.11]{JonesNYJM}.
\end{rem}

\begin{defn}\label{Def:Obstruction}
 In the context of Theorem \ref{Thm:Pasten}, for $G(t)\in \Z[t]$ and a prime $\ell$, if $G(z)\equiv 0 \pmod{\ell^2}$ for all $z\in \left(\Z/\ell^2\Z\right)^{*}$, we say that $G(t)$ has a \emph{local obstruction at $\ell$}. A polynomial $G(t)\in \Z[t]$ is said to have \emph{no local obstructions}, if for every prime $\ell$ there exists some $z\in \left(\Z/\ell^2\Z\right)^{*}$ such that $G(z)\not \equiv 0 \pmod{\ell^2}$.
\end{defn}

Note that $C_G>0$ in \eqref{Eq:CG} if and only if $G(t)$ has no local obstructions. Consequently, it follows that $N_G(X)\to \infty$ as $X\to \infty$ in \eqref{Eq:NG}, when $G(t)$ has no local obstructions. Hence, we have the
following immediate corollary of Theorem \ref{Thm:Pasten}.
\begin{cor}\label{Cor:Squarefree}
 Let $G(t)\in \Z[t]$, and suppose that $G(t)$ factors into a product of distinct non-constant polynomials $\gamma_i(t)\in \Z[x]$ that are irreducible over $\Z$, such that the degree of each $\gamma_i(t)$ is at most $3$. To avoid the situation when $C_G=0$ (in \eqref{Eq:CG}), we suppose further that $G(t)$ has no local obstructions.
  Then there exist infinitely many primes $p$ such that $G(p)$ is squarefree.
\end{cor}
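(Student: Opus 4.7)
The plan is to deduce the corollary as essentially a direct consequence of Theorem~\ref{Thm:Pasten}, by showing that the hypothesis ``no local obstructions'' forces the leading constant $C_G$ in the asymptotic formula \eqref{Eq:NG} to be strictly positive, so that $N_G(X)\to\infty$ and in particular $N_G(X)$ is unbounded.

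First, I would invoke Theorem~\ref{Thm:Pasten} to obtain
\[
N_G(X)\sim C_G\,\frac{X}{\log X},\qquad C_G=\prod_{\ell\text{ prime}}\left(1-\frac{\rho_G(\ell^2)}{\ell(\ell-1)}\right).
\]
It therefore suffices to verify $C_G>0$. For each prime $\ell$, the hypothesis of no local obstructions supplies some $z\in(\Z/\ell^2\Z)^{*}$ with $G(z)\not\equiv 0\pmod{\ell^2}$, which means $\rho_G(\ell^2)\le \ell(\ell-1)-1<\ell(\ell-1)$. Hence every factor of the Euler product is strictly positive.

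The one point that is not purely formal is the convergence of the infinite product to a nonzero value: finitely many positive factors could still produce an empty product if the remaining factors drove it to $0$. To rule this out, I would note that for all primes $\ell$ larger than the (finite) product of the discriminants of the irreducible factors $\gamma_i(t)$, Hensel's lemma gives $\rho_G(\ell^2)\le\deg(G)$, since each simple root of $G$ modulo $\ell$ lifts uniquely modulo $\ell^2$ (and units do not meet the locus $\ell\mid z$). Consequently, for all sufficiently large $\ell$,
\[
0<1-\frac{\rho_G(\ell^2)}{\ell(\ell-1)}=1+O\!\left(\frac{1}{\ell^2}\right),
\]
and so $\sum_\ell \rho_G(\ell^2)/(\ell(\ell-1))$ converges, which forces the Euler product $C_G$ to converge to a positive limit.

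Combining these observations, $C_G>0$, so $N_G(X)\sim C_G\,X/\log X\to\infty$ as $X\to\infty$. In particular, $N_G(X)$ is unbounded, which by definition means there exist infinitely many primes $p$ with $G(p)$ squarefree. The only mild subtlety in the whole argument is the tail bound on $\rho_G(\ell^2)$ ensuring positive convergence of the Euler product; the remainder is bookkeeping on top of Theorem~\ref{Thm:Pasten}.
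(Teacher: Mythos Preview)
Your proposal is correct and follows the same route as the paper, which simply records (just before the corollary) that no local obstructions is equivalent to $C_G>0$ and then invokes Theorem~\ref{Thm:Pasten}. You add the only nontrivial detail the paper leaves implicit, namely the Hensel-type tail bound $\rho_G(\ell^2)\le \deg(G)$ for large $\ell$ ensuring the Euler product converges to something positive; one small refinement is that you should exclude primes dividing $\Delta(G)$ (or, equivalently, the discriminants \emph{and} pairwise resultants of the $\gamma_i$), not just the product of the individual discriminants, to guarantee separability of $G$ mod $\ell$.
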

The following lemma, which generalizes a discussion found in \cite{JonesBAMS}, will be useful in the proof of item \eqref{Main1 I4} of Theorem \ref{Thm:Main1}.
\begin{lemma}\label{Lem:ObstructionCheck}
  Let $G(t)\in \Z[t]$ with $\deg(G)=N$, and suppose that $G(t)$ factors into a product of distinct non-constant polynomials that are irreducible over $\Z$, such that the degree of each factor is at most $3$. If $G(t)$ has an obstruction at the prime $\ell$, then $\ell\le (N_{\ell}+2)/2$, where $N_{\ell}$ is the number of not-necessarily distinct non-constant linear factors of $G(t)$ in $\F_{\ell}[t]$.
\end{lemma}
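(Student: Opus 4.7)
The plan is to show that a local obstruction at $\ell$ forces both $\bar G(t)$ and $\bar G'(t)$ (the reductions of $G$ and $G'$ modulo $\ell$) to vanish at every element of $\F_\ell^{*}$, which then forces $(t^{\ell-1}-1)^2$ to divide $\bar G(t)$ in $\F_\ell[t]$. Since $(t^{\ell-1}-1)^2$ contributes $2(\ell-1)$ linear factors counted with multiplicity, this will give $N_\ell \ge 2(\ell-1)$, equivalently $\ell\le(N_\ell+2)/2$.

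First I would unpack the obstruction. Fix $a\in\{1,2,\ldots,\ell-1\}$. For each $k\in\{0,1,\ldots,\ell-1\}$, the integer $a+k\ell$ represents a unit in $\Z/\ell^{2}\Z$, so the hypothesis gives $G(a+k\ell)\equiv 0\pmod{\ell^{2}}$. The integer Taylor expansion
\[
G(a+k\ell)\;\equiv\; G(a)+k\ell\, G'(a)\pmod{\ell^{2}}
\]
is valid because each term of order $j\ge 2$ carries a factor of $(k\ell)^{j}$, which is divisible by $\ell^{2}$. Taking $k=0$ yields $G(a)\equiv 0\pmod{\ell^{2}}$, and then taking $k=1$ yields $\ell G'(a)\equiv 0\pmod{\ell^{2}}$, hence $G'(a)\equiv 0\pmod{\ell}$. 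Thus $\bar G(a)=\bar G'(a)=0$ in $\F_\ell$ for every $a\in\F_\ell^{*}$.

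Consequently $(t-a)$ divides $\gcd(\bar G(t),\bar G'(t))$ for each $a\in\F_\ell^{*}$, so each $(t-a)$ occurs with multiplicity at least $2$ in the factorization of $\bar G(t)$ in $\F_\ell[t]$. Since the linear polynomials $(t-a)$ for distinct $a\in\F_\ell^{*}$ are pairwise coprime in $\F_\ell[t]$,
\[
\prod_{a\in\F_\ell^{*}}(t-a)^{2}\;=\;(t^{\ell-1}-1)^{2}\ \text{divides}\ \bar G(t)\ \text{in}\ \F_\ell[t].
\]
The polynomial $(t^{\ell-1}-1)^{2}$ has exactly $2(\ell-1)$ linear factors counted with multiplicity, so the number of linear factors of $\bar G(t)$ with multiplicity satisfies $N_\ell\ge 2(\ell-1)$, giving $\ell\le(N_\ell+2)/2$.

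The only step requiring any care is the Taylor truncation modulo $\ell^{2}$ in Step 2; everything else is elementary polynomial divisibility in $\F_\ell[t]$. The degree-at-most-$3$ hypothesis on the irreducible factors of $G$ plays no role in this argument, as it is needed only to invoke Theorem~\ref{Thm:Pasten} later; the conclusion of the lemma depends solely on $G$ having an obstruction at $\ell$ and on the reduction $\bar G(t)$ being nonzero so that counting linear factors in $\F_\ell[t]$ is meaningful.
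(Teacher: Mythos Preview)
Your proof is correct and follows essentially the same route as the paper: both arguments show that every $a\in\F_\ell^{*}$ is a root of $\bar G$ of multiplicity at least $2$, giving $N_\ell\ge 2(\ell-1)$. Your Taylor-expansion computation is a clean stand-in for the paper's Hensel-lifting phrasing, and by working directly with $\bar G$ in $\F_\ell[t]$ you sidestep the paper's preliminary ``without loss of generality $G$ splits in $\Z[t]$'' reduction.
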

\begin{proof}
Since no factors of $G(t)$ in $\Z[t]$ are constant, we can assume that the content of every factor of $G(t)$ is 1.
Furthermore, since a nonlinear irreducible factor of $G(t) \Mod{\ell}$ never has a zero in $\left(\Z/\ell^2\Z\right)^*$, we can also assume, without loss of generality, that $G(t)$ factors completely into $N$, not-necessarily distinct, non-constant linear factors in $\Z[t]$. Thus,
\begin{equation}\label{Eq:G(t)}
 G(t)\equiv c\prod_{j=0}^{\ell-1}(t-j)^{e_j} \pmod{\ell},
 \end{equation} where $c\not \equiv 0 \pmod{\ell}$, $e_j\ge 0$ for each $j$ and $N=\sum_{j=0}^{\ell-1}e_j$.
   Observe that if $e_j=0$ for some $j\ne 0$ in \eqref{Eq:G(t)}, then $G(j)\not \equiv 0 \Mod{\ell^2}$, contradicting the fact that $G(t)$ has an obstruction at the prime $\ell$. If $e_j=1$ for some $j\ne 0$ in \eqref{Eq:G(t)}, then the zero $j$ of $x-j \Mod{\ell}$ lifts to the unique zero $j$ of $x-j \Mod{\ell^2}$. Thus, $G(j+\ell)\not \equiv 0 \Mod{\ell^2}$, again contradicting the fact that $G(t)$ has an obstruction at the prime $\ell$. Hence, $e_j\ge 2$ for all $j\in \{1,2,\ldots,\ell-1\}$. Assume, by way of contradiction, that $\ell> (N+2)/2$. Then
  \[2(\ell-1)>N=\sum_{j=0}^{\ell-1}e_j=e_0+\sum_{j=1}^{\ell-1}e_j\ge e_0+2(\ell-1),\] which is impossible, and the proof is complete.
\end{proof}

The next theorem follows from \cite{AwtreyPatane}.
\begin{thm} \label{Thm:AP} Assuming the notation of \eqref{Eq:Defs}, suppose that $A\equiv B\equiv 1 \pmod{4}$. If $\FF_{3,A,B}(x)$ is irreducible over $\Q$, then
$\Gal(\FF_{3,A,B})\simeq C_2^2\wr C_2$ if and only if
\[\mbox{none of } \ W_1,\ W_2,\ W_1W_2,\ W_1W_3,\ W_2W_3 \ \mbox{ and }\ W_1W_2W_3 \ \mbox{ is a square,}\] 
 \end{thm}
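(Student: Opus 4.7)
The strategy is to reduce the problem to, and then quote, the classification of Galois groups of octic polynomials of the form $g(x^{2})$ with $g(y)$ an irreducible quartic, as established by Awtrey and Patane. The decisive observation is that
\[
\FF_{3,A,B}(x)=x^{8}+Ax^{6}+Bx^{4}+Ax^{2}+1=g(x^{2}),
\]
where $g(y)=y^{4}+Ay^{3}+By^{2}+Ay+1$ is a reciprocal quartic in $\Z[y]$. The assumed irreducibility of $\FF_{3,A,B}(x)$ over $\Q$ forces $g(y)$ to be irreducible over $\Q$, so the hypothesis of the Awtrey--Patane result is satisfied and the problem becomes one of identifying Galois-theoretic invariants of $g$.

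The second step is to match the invariants appearing in their criterion with the quantities $W_{1},W_{2},W_{3}$. A direct computation gives $g(1)=B+2A+2=W_{2}$ and $g(-1)=B-2A+2=W_{1}$, while the standard reciprocal substitution $z=y+1/y$ converts $g(y)=0$ into the resolvent quadratic $z^{2}+Az+(B-2)=0$ of discriminant $A^{2}-4(B-2)=W_{3}$. Thus $W_{1},W_{2},W_{3}$ are precisely the invariants that distinguish among the candidate Galois groups for $\FF_{3,A,B}(x)$, and their pairwise and triple products exhaust, modulo squares, the quadratic subfields that can appear in the splitting field.

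Because $g(y)$ is a reciprocal quartic, $\Gal(g)$ is contained in $D_{4}$, and $\Gal(\FF_{3,A,B})$ sits inside $C_{2}\wr D_{4}$. The wreath product $C_{2}^{2}\wr C_{2}$, of order $32$, is the ``generic'' Galois group in this setting: it arises exactly when no potentially trivializable quadratic subfield of the splitting field is collapsed to $\Q$, equivalently, when none of the six quantities $W_{1},\,W_{2},\,W_{1}W_{2},\,W_{1}W_{3},\,W_{2}W_{3},\,W_{1}W_{2}W_{3}$ is a rational square. The hypothesis $A\equiv B\equiv 1\pmod{4}$ forces each $W_{i}$ to be odd, eliminating the $2$-adic boundary cases in which the Awtrey--Patane dichotomy could fail to apply cleanly.

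The main obstacle, were one to reprove this without invoking the cited classification, would be the group-theoretic bookkeeping: verifying that every transitive subgroup of $S_{8}$ that is consistent with the reciprocal octic structure and that is strictly smaller than $C_{2}^{2}\wr C_{2}$ is detected by the squareness of at least one of the six listed products, and conversely that simultaneous non-squareness of all six excludes every such proper subgroup. Since exactly this analysis is carried out in \cite{AwtreyPatane}, the proof reduces to matching their hypotheses and notation with those of \eqref{Eq:Defs}.
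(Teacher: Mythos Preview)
Your proposal is correct and takes essentially the same approach as the paper: both defer the substance of the argument to the Awtrey--Patane classification \cite{AwtreyPatane}. In fact, the paper offers no proof at all beyond the sentence ``The next theorem follows from \cite{AwtreyPatane},'' whereas you supply the explicit dictionary---$W_{1}=g(-1)$, $W_{2}=g(1)$, and $W_{3}$ the discriminant of the resolvent quadratic $z^{2}+Az+(B-2)$---that makes the citation verifiable; this added detail is helpful and accurate.
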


\section{The Proof of Theorem \ref{Thm:Main1}}\label{Section:Main1 Proof}
The following lemma will be useful in the proof of item \eqref{Main1 I8} of Theorem \ref{Thm:Main1}
\begin{lemma}\label{Lem:Inductivemonogenicity}
  If $\FF_{n,A,B}(x)$ is monogenic for some $n\ge 3$, then $\FF_{n-1,A,B}(x)$ is monogenic.
\end{lemma}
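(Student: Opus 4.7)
The plan is to exploit the structural relation $\FF_{n,A,B}(x)=\FF_{n-1,A,B}(x^{2})$, which follows directly from the definition, together with the fact that $\FF_{n,A,B}(x)$ is an even polynomial in $x$ (all exponents $2^{n},3\cdot 2^{n-2},2^{n-1},2^{n-2}$ are even for $n\ge 2$). Writing $\theta$ for a root of $\FF_{n,A,B}(x)$ and setting $\eta=\theta^{2}$, one has $\FF_{n-1,A,B}(\eta)=0$.

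First I would check that $\FF_{n-1,A,B}(x)$ is irreducible over $\Q$. Monogenicity of $\FF_{n,A,B}(x)$ forces irreducibility, so $[\Q(\theta):\Q]=2^{n}$. Since $\theta$ satisfies $x^{2}-\eta=0$ over $\Q(\eta)$, we have $[\Q(\theta):\Q(\eta)]\le 2$, hence $[\Q(\eta):\Q]\ge 2^{n-1}$. As $\eta$ is already a root of $\FF_{n-1,A,B}$ of degree $2^{n-1}$, equality holds, so $\FF_{n-1,A,B}(x)$ is the minimal polynomial of $\eta$ and is irreducible. In particular, setting $K=\Q(\theta)$ and $L=\Q(\eta)$, we obtain $[K:L]=2$.

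The main step is the monogenicity transfer. Because $\FF_{n,A,B}(-x)=\FF_{n,A,B}(x)$, the element $-\theta$ is also a root of $\FF_{n,A,B}$, so there is a $\Q$-automorphism $\sigma$ of $K$ sending $\theta\mapsto-\theta$. Clearly $\sigma(\eta)=\eta$ and $\sigma\ne\mathrm{id}$, so $\sigma$ generates $\Gal(K/L)$. Using the standard identity $\Z_{L}=\Z_{K}\cap L=\Z_{K}^{\sigma}$ together with the hypothesis $\Z_{K}=\Z[\theta]$, I would write an arbitrary element of $\Z_{K}$ as $\alpha=\sum_{i=0}^{2^{n}-1}a_{i}\theta^{i}$ with $a_{i}\in\Z$ and observe that $\sigma(\alpha)=\sum_{i}(-1)^{i}a_{i}\theta^{i}$. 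The equality $\sigma(\alpha)=\alpha$ forces $a_{i}=0$ for every odd $i$, so
\[
\Z_{L}=\Bigl\{\sum_{j=0}^{2^{n-1}-1}a_{2j}\theta^{2j}:a_{2j}\in\Z\Bigr\}=\Z[\eta].
\]
This is exactly the statement that $\FF_{n-1,A,B}(x)$ is monogenic.

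I do not anticipate a serious obstacle; the only subtlety is justifying that the equation $\Z_{L}=\Z_{K}\cap L$ combined with the explicit $\Z$-basis of $\Z_{K}$ gives $\Z_{L}=\Z[\eta]$, which reduces to the elementary linear-algebra observation above about fixed points of $\sigma$.
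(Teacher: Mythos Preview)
Your proof is correct and shares the paper's starting point---the relation $\FF_{n,A,B}(x)=\FF_{n-1,A,B}(x^{2})$ and the passage to $\eta=\theta^{2}$. The paper's argument, however, simply asserts that the powers of $\eta$ form an integral basis for $\Z_{K_{n-1}}$ without justification (and in fact contains an apparent slip, writing $\theta^{1/2}$ where $\theta^{2}$ is intended). Your use of the involution $\sigma:\theta\mapsto-\theta$ to identify $\Z_{L}=\Z_{K}\cap L=\Z_{K}^{\sigma}$ and then read off the $\sigma$-fixed elements of $\Z[\theta]$ is exactly the detail needed to turn the paper's sketch into a proof; it is a modest but genuine addition rather than a different route.
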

\begin{proof}
Let $\Z_{K_n}$ be the ring of integers of $K_n=\Q(\theta)$, where $\FF_{n,A,B}(\theta)=0$. Then  $\{1,\theta,\theta^2,\ldots ,\theta^{2^n-1}\}$ is a basis for $\Z_{K_n}$ since $\FF_{n,A,B}(x)$ is monogenic. Observe that $\FF_{n,A,B}(x)=\FF_{n-1,A,B}(x^2)$. It follows that $\FF_{n-1,A,B}(x)$ is the minimal polynomial of $\theta^{1/2}$ and  $\{1,\theta^{1/2},\theta,\theta^{3/2},\ldots ,\theta^{(2^{n-1}-1)/2}\}$ is a power basis for $\Z_{K_{n-1}}$, the ring of integers of $K_{n-1}=\Q(\theta^{1/2})$.
\end{proof}
\begin{proof}[Proof of Theorem \ref{Thm:Main1}]
  For item \eqref{Main1 I1}, since $A\equiv B\equiv 1 \pmod{4}$, it is easy to verify that
  \[W_1W_2\equiv W_3\equiv 5\pmod{8},\] which implies that neither $W_1W_2$ nor $W_3$ is a square. Hence, item \eqref{Main1 I1} follows from \cite{Dickson}.

  For item \eqref{Main1 I2}, let $K=\Q(\theta)$ with ring of integers $\Z_K$, where $\FF_{2,A,B}(\theta)=0$. Suppose first that $\FF_{2,A,B}(x)$ is monogenic and assume, by way of contradiction, that $W_i$ is not squarefree for some $i$. Since $A\equiv B\equiv 1 \pmod{4}$, note that $2\nmid W_i$.

  We begin with $i=1$, and suppose that $q$ is a prime such that $q^2\mid W_1$. Using $q$, we apply Theorem \ref{Thm:Dedekind} with $T(x):=\FF_{n,A,B}(x)$. Then $B\equiv 2A-2\pmod{q}$, so that
  \[T(x)\equiv (x+1)^2g(x) \pmod{q},\] where $g(x)=x^2+(A-2)x+1$.

  If $g(x)$ is irreducible in $\F_q[x]$, then we can let
   \[h_1(x)=(x+1)g(x) \quad \mbox{and} \quad h_2(x)=x+1.\]
   Thus,
     \begin{align*}
     F(x)&=\dfrac{h_1(x)h_2(x)-T(x)}{q}=\dfrac{(x+1)^2g(x)-T(x)}{q}\\
     &=-\left(\dfrac{B+2-2A}{q}\right) x^2\equiv 0 \pmod{q},
     \end{align*}
  which implies that $\gcd(\overline{F},\overline{h_1},\overline{h_2})\ne 1$. Hence, $q\mid [\Z_K:\Z[\theta]]$ by Theorem \ref{Thm:Dedekind}, contradicting the fact that $\FF_{2,A,B}(x)$ is monogenic.

  If $g(x)$ is reducible in $\F_q[x]$, then we can let
   \[h_1(x)=(x+1)(x-r_1)(x-r_2) \quad \mbox{and} \quad h_2(x)=x+1,\] for some $r_1,r_2\in \Z$ with $r_1+r_2\equiv 2-A\pmod{q}$ and $r_1r_2\equiv 1\pmod{q}$.
   Thus,
     \begin{align*}
     F(x)&=\dfrac{h_1(x)h_2(x)-T(x)}{q}\\
     &=\left(\dfrac{2-A-r_1-r_2}{q}\right)x^3+\left(\dfrac{1+r_1r_2-2r_1-2r_2-B}{q}\right)x^2\\
     &\qquad +\left(\dfrac{-A-r_1-r_2+2r_1r_2}{q}\right)x+\dfrac{r_1r_2-1}{q}.
     \end{align*}
     Then, since $F(-1)=-(B+2-2A)/q\equiv 0 \pmod{q}$, it follows that
     \[\gcd(\overline{F},\overline{h_1},\overline{h_2})\equiv 0 \pmod{x+1},\] again contradicting the fact that $\FF_{2,A,B}(x)$ is monogenic. Hence, $W_1$ is squarefree.  The case $i=2$ is similar and we omit the details.

     Now let $i=3$, and let $q$ be a prime with $q^2\mid W_3$. Since $B\equiv (A^2+8)/4 \pmod{q}$, we have that $T(x)\equiv g(x)^2 \pmod{q}$, where
     $g(x)=x^2+(A/2)x+1$.

     If $g(x)$ is irreducible in $\F_q[x]$, we can let
     \[h_1(x)=h_2(x)=x^2+\left(\frac{A+q}{2}\right)x+1\in \Z[x].\]  Thus,
     \[F(x)=\dfrac{h_1(x)h_2(x)-T(x)}{q}=x\left(x^2+\left(\dfrac{\dfrac{A^2-4B+8}{q}+2A+q}{4}\right)x+1\right).\] Hence, since $q^2\mid W_3$, it follows that
     \[\overline{F}(x)=x(x^2+(A/2)x+1)=xg(x),\] and it is easy to see that $\gcd(\overline{F},\overline{h_1},\overline{h_2})\ne 1$. Hence, $q\mid [\Z_K:\Z[\theta]]$ by Theorem \ref{Thm:Dedekind}, again contradicting the fact that $\FF_{2,A,B}(x)$ is monogenic.

     If $g(x)$ is reducible in $\F_q[x]$, then
          \[\overline{T}(x)=\left(x-\dfrac{-A-\sqrt{A^2-16}}{4}\right)^2\left(x-\dfrac{-A+\sqrt{A^2-16}}{4}\right)^2.\]
     Thus, we can let $h_1(x)=h_2(x)=(x-r_1)(x-r_2)$ for some $r_1,r_2\in \Z$ with
     \[r_1\equiv \dfrac{-A-\sqrt{A^2-16}}{4} \pmod{q} \quad \mbox{and}\quad r_2\equiv \dfrac{-A+\sqrt{A^2-16}}{4} \pmod{q}.\]
      Therefore,
     \begin{align*}
     F(x)&=\dfrac{h_1(x)h_2(x)-T(x)}{q}\\
     &=-\left(\dfrac{A+2(r_1+r_2)}{q}\right)x^3+\left(\dfrac{(r_1+r_2)^2-B+2r_1r_2}{q}\right)x^2\\
     &\qquad -\left(\dfrac{A+2r_1r_2(r_1+r_2)}{q}\right)x+\dfrac{(r_1r_2)^2-1}{q},
     \end{align*}
     so that
     \[F(r_1)=-\dfrac{r_1^4+Ar_1^3+Br_1^2+Ar_1+1}{q}=-\dfrac{T(r_1)}{q}.\] Since $T(r_1)\equiv T'(r_1)\equiv 0 \pmod{q}$, it follows by Hensel that $T(r_1)\equiv 0 \pmod{q^2}$. Hence, $\overline{F}(r_1)=0$ so that $\gcd(\overline{F},\overline{h_1},\overline{h_2})\ne 1$ and $q\mid [\Z_K:\Z[\theta]]$ by Theorem \ref{Thm:Dedekind}, supplying the final contradiction in this direction to the fact that $\FF_{2,A,B}(x)$ is monogenic.

     Conversely, suppose now that $W_1$, $W_2$ and $W_3$ are squarefree, and recall the definition of $P$, $Q$ and $R$ in \eqref{Eq:Defs}.
         If $P=Q=R=1$, then $\Delta(\FF_{2,A,B})=\Delta(K)$ from \eqref{Eq:Dis-Dis}, so that  $\FF_{2,A,B}(x)$ is monogenic. That is, we only have to address the primes $q$ dividing $PQR$. Note that $q\ge 3$ since $2\nmid W_1W_2W_3$.

     Suppose first then that $q$ is a prime with $q\mid P$.
     Then $B\equiv 2A-2 \pmod{q}$, since $q\mid W_1$, so that
     \[W_3=A^2-4B+8\equiv (A-4)^2\equiv 0 \pmod{q},\] since $q\mid W_3$.
     Hence, $A\equiv 4 \pmod{q}$ and $B\equiv 6 \pmod{q}$ since $q\ne 2$. Then, with $T(x):=\FF_{2,A,B}(x)$, we have that
     \[\overline{T}(x)=(x+1)^4.\] Therefore, applying Theorem \ref{Thm:Dedekind}, we can let
     \[h_1(x)=x+1 \quad \mbox{and}\quad h_2(x)=(x+1)^3\]
     to get
     \[F(x)=\frac{h_1(x)h_2(x)-T(x)}{q}=-x\left(\left(\frac{A-4}{q}\right)x^2+\left(\frac{B-6}{q}\right)x+\frac{A-4}{q}\right).\] Then
     \[F(-1)=-\frac{B+2-2A}{q}\not \equiv 0 \pmod{q},\]
     since $B+2-2A$ is squarefree. Hence, $\gcd(\overline{F},\overline{h_1},\overline{h_2})=1$, and $q\nmid [\Z_K:\Z[\theta]]$ by Theorem \ref{Thm:Dedekind}.

     Suppose next that $q$ is a prime with $q\mid Q$.  Then, since $q\mid W_1$ and $q\mid W_2$, we have that
       \[B\equiv 2A-2\equiv -2A-2 \pmod{q},\]
     which implies that $q\mid A$ since $q\ne 2$,  and $B\equiv -2 \pmod{q}$. Letting $T(x):=\FF_{2,A,B}(x)$, straightforward calculations yield
     \[\overline{T}(x)=(x-1)^2(x+1)^2.\] Hence, we can let
     \[h_1(x)=h_2(x)=(x-1)(x+1),\] and apply Theorem \ref{Thm:Dedekind} to get that
     \[F(x)=\frac{h_1(x)h_2(x)-T(x)}{q}=-x\left(\left(\frac{A}{q}\right)x^2+\left(\frac{B+2}{q}\right)x+\frac{A}{q}\right).\]
     Thus, since $W_1$ and $W_2$ are squarefree, it follows that
     \[F(1)=-\dfrac{B+2+2A}{q}\not \equiv 0 \pmod{q}\quad \mbox{and}\quad F(-1)=-\dfrac{B+2-2A}{q}\not \equiv 0 \pmod{q}.\] Therefore, $\gcd(\overline{F},\overline{h_1},\overline{h_2})=1$, and $q\nmid [\Z_K:\Z[\theta]]$ by Theorem \ref{Thm:Dedekind}.

The last possibility of $q\mid R$ is similar and we omit the details. Thus, the proof of item \eqref{Main1 I2} is complete.

For item \eqref{Main1 I3}, let $A=8k+1$ and $B=8t+1$, where $k\in \Z$ and $t$ is an indeterminate. Then 
\[W_1=8t-16k+1,\quad W_2=8t+16k+5 \quad \mbox{and} \quad W_3=32t-64k^2-16k-5.\] Let $G(t):=W_1W_2W_3\in \Z[t]$. Then $G(t)$ has no obstructions by Lemma \ref{Lem:ObstructionCheck}. Hence, by Corollary \ref{Cor:Squarefree}, there exist infinitely many primes $p$ such that $G(p)$ is squarefree. Moreover, since $\deg(G(t))=3$, we can assume that there exist infinitely many such primes $p$ with $G(p)>1$. Consequently, $W_1$, $W_2$ and $W_3$ are squarefree for each such prime $p$, and therefore, $\FF_{2,A,B}(x)$ is monogenic by item \eqref{Main1 I2}. Furthermore, for each such prime $p$, it follows that $\Gal(\FF_{2,A,B})\simeq D_4$ by item \eqref{Main1 I2}. To see that these quartic fields are distinct, we assume, by way of contradiction, that there exist primes $p_1\ne p_2$ such that $G(p_i)$ is squarefree, $K_1=\Q(\theta_1)=K_2=\Q(\theta_2)$, where
\[\FF_{2,8k+1,8p_1+1}(\theta_1)=0=\FF_{2,8k+1,8p_2+1}(\theta_2).\] Since $\FF_{2,8k+1,8p_1+1}(x)$ and $\FF_{2,8k+1,8p_1+1}(x)$ are both monogenic, it follows that
\begin{equation}\label{Eq:Diseq}
\Delta(\FF_{2,8k+1,8p_1+1})=\Delta(\FF_{2,8k+1,8p_2+1}).
\end{equation} Recall from \eqref{Eq:Delta(f)} that $\Delta(\FF_{2,8k+1,8t+1})=W_1W_2W_3^2$. 
 Because $G(p_1)$ and $G(p_2)$ are squarefree, it then follows from \eqref{Eq:Diseq} that
 \begin{equation}\label{Eq:equations}
 W_1W_2|_{t=p_1}=W_1W_2|_{t=p_2} \quad \mbox{and} \quad W_3|_{t=p_1}=\pm W_3|_{t=p_2}.
  \end{equation} Maple easily reveals the impossibility in integers of the equations in \eqref{Eq:equations} when $p_1\ne p_2$, which completes the proof of item \eqref{Main1 I3}.

We turn now to item \eqref{Main1 I4}. Since $\FF_{2,1,1}(x)=\Phi_5(x)$, we see that $\FF_{2,1,1}(x)$ is monogenic with $\Gal(\FF_{2,1,1})\simeq C_4$, which proves the direction assuming $A=B=1$.

For the converse, we assume that $\FF_{2,A,B}(x)$ is monogenic with $\Gal(\FF_{2,A,B})\simeq C_4$.
For part of this proof, we use an approach that is a modification of methods employed in \cite[pp. 26--28]{JonesOctics}. Although the arguments are similar, we provide details for the sake of completeness.

Since $\FF_{2,A,B}(x)$ is monogenic with $\Gal(\FF_{2,A,B})\simeq C_4$, it follows from items \eqref{Main1 I2} and \eqref{Main1 I1} of this theorem, respectively, that
\begin{equation}\label{Eq:Wconditions}
W_1,\ W_2\ \mbox{and}\ W_3 \ \mbox{are squarefree, and}\ W_1W_2W_3 \ \mbox{is a square.}
\end{equation}
Since $W_1W_2W_3$ is a square, we have that $W_1W_2$ and $W_3$ are either both positive or both negative.
If
 \[W_1W_2=(B+2)^2-4A^2<0 \quad \mbox{and}\quad W_3=A^2-4B+8<0,\] then
 \[(B+2)^2<4A^2<16B-32,\]
 which yields the contradiction
 \[(B-6)^2=(B+2)^2-16B+32<0.\]
 Hence,
 \begin{equation}\label{Eq:W1W2 W5 pos}
 W_1W_2>0\quad \mbox{and} \quad W_3>0.
 \end{equation}
Then,
  \begin{align}\label{Eq:PQR}
 \begin{split}
   \abs{W_1}&=\abs{B+2-2A}=PQ\\
   \abs{W_2}&=\abs{B+2+2A}=QR\\
  W_3&=A^2-4B+8=PR,
   \end{split}
 \end{align}
 where $PQR$ is squarefree. Thus, either $PQR=1$ or $PQR$ is the product of distinct odd primes. If $PQR=1$, then $P=Q=R=1$, which implies that $A=0$ from \eqref{Eq:PQR}, contradicting the fact that $A\equiv 1 \pmod{4}$. Hence, $PQR>1$ is the product of distinct odd primes.

We claim that $R>1$. To establish this claim, we assume, by way of contradiction, that $R=1$.
  Then, regardless of whether $W_1$ and $W_2$ are both positive or both negative in \eqref{Eq:W1W2 W5 pos},
 it follows from \eqref{Eq:PQR} that $W_1=W_2W_3$.
 Solving this equation in Maple reveals that
 \[y^2=A^4+16A^3+94A^2+304A+225=(A+9)(A+1)(A^2+6A+25),\]
 for some $y\in \Z$. Using the command \[{\bf IntegralQuarticPoints}([1,16,94,304,225],[-1,0]);\] in Magma yields the solutions
 $A\in \{-1,0,-9,-11,4\}$. Since $A\equiv 1 \pmod{4}$, we see that $A=-11$, so that $B\in \{21,31\}$. Since $B\equiv 1 \pmod{4}$, we conclude that the only viable coefficient pair is $(A,B)=(-11,21)$, in which case we have that
 $W_1=B+2-2A=45$, contradicting the fact that $W_1$ is squarefree.

We proceed by providing details first in the situation when $W_1>0$ and $W_2>0$.
Invoking Maple to solve the system \eqref{Eq:PQR}, we get that
\begin{equation}\label{Eq:Main}
P^2Q^2-2PQ^2R+Q^2R^2-32PQ-32QR-16PR+256=0.
\end{equation}
It follows from \eqref{Eq:Main} that
\begin{equation}\label{Eq:Div}
P\mid (QR-16),\quad Q\mid (PR-16)\quad \mbox{and} \quad R\mid (PQ-16).
\end{equation}
  Thus, since $PQR$ is odd and squarefree, we deduce from \eqref{Eq:Div} that $PQR$ divides
  \begin{align*}
  Z:&=\dfrac{(QR-16)(PR-16)(PQ-16)-PQR(PQR-16P-16Q-16R)}{256}\\
  &=PQ+QR+PR-16.
  \end{align*}
  Suppose that $P\ge 3$ and $Q\ge 3$. It is then easy to see that $Z>0$.
   Hence, since $PQR$ divides $Z$, we have that $H:=PQR-Z\le 0$.
However, using Maple, we see that the minimum value of $H$, subject to the constraints $\{P\ge 3,Q\ge 3,R\ge 3\}$, is 16. Thus, we deduce that $P=1$ or $Q=1$.

Letting $P=1$ in \eqref{Eq:Main}, and solving for $Q$ yields
\begin{equation}\label{Q}
Q=\dfrac{4(4R+4\pm \sqrt{R^3-2R^2+65R})}{(R-1)^2}.
\end{equation}
For $Q$ to be a viable solution, we conclude from \eqref{Q} that
\begin{equation}\label{Eq:EQ}
y^2=R^3-2R^2+65R,
\end{equation}
for some integer $y$. Using Sage to find all integral points (with $y\ge 0$) on the elliptic curve \eqref{Eq:EQ} we get
\[(R,y)\in \{(0,0),(1,8),(5,20),(13,52),(16,68),(45,300),(65,520),(1573,62348)\}.\] Since $R\ge 3$ is odd and squarefree, we have that
$R\in \{5,13,65\}$. Plugging these values into \eqref{Q} reveals only the three valid integer solution triples
\begin{equation}\label{Eq:P=1}
(P,Q,R)\in \{(1,11,5), (1,3,13),(1,1,5)\}.
\end{equation}

Next, we let $Q=1$ so that $W_1W_2=W_3$ from which it follows that
\begin{equation}\label{Eq:Pell}
  A^2-5\left(\frac{B+4}{5}\right)^2=-4.
\end{equation} It is well known \cite{Leveque, Lind} that the solutions to the Pell equation $X^2-5Y^2=-4$ are
\[(X,Y)=(\pm {\mathfrak L}_{2n-1},\pm {\mathfrak F}_{2n-1}),\] where ${\mathfrak L}_N$ and ${\mathfrak F}_N$ are, respectively, the $N$th Lucas and $N$th Fibonacci numbers, with ${\mathfrak L}_0=2$ and ${\mathfrak F}_0=0$. Thus, from \eqref{Eq:Pell}, we have
\[A=\pm {\mathfrak L}_{2n-1} \quad \mbox{and} \quad B=\pm 5{\mathfrak F}_{2n-1}-4.\]
Since
\[{\mathfrak L}_{2n-1}\equiv \left\{\begin{array}{cl}
  1 \pmod{4} \quad & \mbox{if and only if} \quad n\equiv 1 \pmod{3}\\[.5em]
  3 \pmod{4} \quad & \mbox{if and only if} \quad n\equiv 0 \pmod{3},
 \end{array} \right.\]
 \[{\mathfrak F}_{2n-1}\equiv \left\{\begin{array}{cl}
  1 \pmod{4} \quad & \mbox{if and only if} \quad n\equiv 0,1 \pmod{3}\\[.5em]
  3 \pmod{4} \quad & \mbox{is not possible,}
 \end{array} \right.\] and $A\equiv B\equiv 1 \pmod{4}$,
 the solutions to \eqref{Eq:Pell} are
 \begin{equation}\label{Eq:(A,B)}
 (A,B)=\left\{\begin{array}{cl}
   (-{\mathfrak L}_{2n-1},5{\mathfrak F}_{2n-1}-4) & \mbox{if $n\equiv 0\pmod{3}$}\\[.5em]
   ({\mathfrak L}_{2n-1},5{\mathfrak F}_{2n-1}-4) & \mbox{if $n\equiv 1\pmod{3}$}\\[.5em]
   \mbox{no solutions} & \mbox{if $n\equiv 2\pmod{3}$.}
 \end{array}\right.
 \end{equation}

 We provide details only in the case $n\equiv 1 \pmod{3}$, since the case $n\equiv 0 \pmod{3}$ is similar and yields no new solutions. Using \eqref{Eq:(A,B)}, Proposition \ref{Prop:LF} and basic properties of the recurrence relation for Lucas numbers, we deduce that
 \begin{align}\label{Eq:PRsquares}
 \begin{split}
   P&=B+2-2A={\mathfrak L}_{2(n-2)}-2={\mathfrak L}_{n-2}^2 \quad \mbox{if $n\equiv 1 \pmod{6}$,}\\
   R&=B+2+2A={\mathfrak L}_{2(n+1)}-2={\mathfrak L}_{n+1}^2 \quad \mbox{if $n\equiv 4 \pmod{6}$.}
   \end{split}
 \end{align}
Observe then that \eqref{Eq:PRsquares} contradicts the fact that $P$ and $R$ are squarefree unless $n=1$, in which case $A=B=1$ and $\FF_{2,A,B}(x)=\Phi_5(x)$. Note that $(P,Q,R)=(1,1,5)$, which has already been found in \eqref{Eq:P=1}.

Thus, we only need to analyze the other two solutions
\[(P,Q,R)\in \{(1,11,5),(1,3,13)\}\] from \eqref{Eq:P=1}. The corresponding coefficient pairs $(A,B)$ for $\FF_{2,A,B}(x)$ are, respectively, $(A,B)=(11,31)$ and $(A,B)=(9,19)$, which contradicts the fact that $A\equiv B\equiv 1\pmod{4}$. Consequently, the only solution found when $W_1>0$ and $W_2>0$ is $A=B=1$.

 We turn now to the situation when $W_1<0$ and $W_2<0$. Using Maple to solve the system \eqref{Eq:PQR} yields the equation
 \begin{equation}\label{Eq:W1<0W2<0}
 P^2Q^2-2PQ^2R+Q^2R^2+32PQ-16PR+32QR+256=0.
 \end{equation}
 Solving \eqref{Eq:W1<0W2<0} for $P$ gives
  \begin{equation}\label{Eq:Pcase2}
 P=\dfrac{Q^2R\pm 4\sqrt{R(Q^2+4)(R-4Q)}-16Q+8R}{Q^2}.
 \end{equation}
 Since $P$ and $R$ are positive integers, we see from \eqref{Eq:Pcase2} that we must have $R>4Q$. Moreover, since $R$ is odd and squarefree with $\gcd(R,Q)=1$, it follows that $R$ divides $Q^2+4$ since $R(Q^2+4)(R-4Q)$ must be a square. Then, making the observation that equation \eqref{Eq:W1<0W2<0} is symmetric in $P$ and $R$ (or simply solving \eqref{Eq:W1<0W2<0} for $R$), we deduce that $P>4Q$ and $P$ divides $Q^2+4$. Piecing together this information tells us, on the one hand, that  $PR>16Q^2$ and, on the other hand, that $PR$ divides $(Q^2+4)$, since $\gcd(P,R)=1$. That is, we conclude that
 \[16Q^2<PR\le Q^2+4,\] an obvious contradiction. Therefore, there are no additional solutions arising from the vacuous situation of $W_1<0$ and $W_2<0$, and the proof of item \eqref{Main1 I4} is complete.

For item \eqref{Main1 I5}, let $A=4t-4s^2-4s+1$. Then, we see that
\[\FF_{3,A,B}(x)=\left\{
\begin{array}{cl}
\FF_{2,2s+1,2t+1}(x)\FF_{2,-(2s+1),2t+1}(x) & \mbox{if $B=4t^2+4t-8s^2-8s+1$}\\[.5em]
\GG_{2,2s+1,2t+1}(x)\GG_{2,-(2s+1),2t+1}(x) & \mbox{if $B=4t^2+4t+8s^2+8s+5$,}
\end{array}\right.\]
where
\[\GG_{2,C,D}(x):=x^4+Cx^3+Dx^2-Cx+1.\]

Conversely, to derive these parametric values of $A$ and $B$, we assume that $\FF_{3,A,B}(x)=\FF_{2,A,B}(x^2)$ is reducible. Since $\FF_{2,A,B}(x)$ is irreducible over $\Q$, it follows that $\FF_{3,A,B}(\pm 1)=B+2+2A\ne 0$. Hence, $\FF_{3,A,B}(x)$ has no linear factors. If we assume that $\FF_{3,A,B}(x)$ has an irreducible quadratic factor, then Maple tells us in every viable situation that $\FF_{2,A,B}(x)$ is reducible, contradicting the fact that $\FF_{2,A,B}(x)$ is irreducible. Thus, we deduce that
\[\FF_{3,A,B}(x)=u_1(x)u_2(x),\]
where $u_i(x)=x^4+a_ix^3+b_ix^2+c_ix\pm 1$ is irreducible over $\Q$. Next, we expand $u_1(x)u_2(x)$, equate coefficients with $\FF_{3,A,B}(x)$ and use Maple to solve the resulting systems of equations, which ultimately yields the parametric values of $A$ and $B$ given here.

Next, for item \eqref{Main1 I6}, we recall notation from \eqref{Eq:Defs} and note then that
\[W_1=2-A,\quad W_2=3A+2 \quad \mbox{and} \quad W_3=A^2-4A+8.\] Since $A\equiv 1 \pmod{4}$, we can write $A=4t+1$ for some $t\in \Z$. Let
\[G(t):=(4t-1)(12t+5)(16t^2-8t+5).\] Since $G(t)\equiv 2(t+2)(t^2+t+2) \pmod{3}$, we see from Lemma \ref{Lem:ObstructionCheck} that we only have to check for obstructions at the prime $\ell=2$. Since $G(1)\equiv 3 \pmod{4}$, we conclude that $G(t)$ has no obstructions. Thus, by Corollary \ref{Cor:Squarefree}, we deduce that there are infinitely many primes $p$ such that $G(p)$ is squarefree. Let $p$ be such a prime, and let $A=4p+1$. It follows that $W_1$, $W_2$ and $W_3$ are squarefree. Also, $\FF_{3,A,A}(x)$ is irreducible over $\Q$ by item \eqref{Main1 I6} since $A\ne 1$.  Since $2\nmid W_1W_2W_3$, straightforward gcd calculations reveal that
\[P=Q=1\quad \mbox{and} \quad R\in \{1,5\}.\] Observe that $\abs{W_3}=1$ has no integer solutions. Furthermore, in integers,
\[\abs{W_1}=1 \ \mbox{if and only if} \ A\in \{1,3\}, \ \mbox{while} \ \abs{W_2}=1 \ \mbox{if and only if} \ A=-1.\]
It follows that
\begin{equation}\label{Eq:Wiconditions}
\mbox{none of } \ W_1,\ W_2,\ W_1W_2,\ W_1W_3,\ W_2W_3 \ \mbox{ and }\ W_1W_2W_3 \ \mbox{ is a square,}
\end{equation} except possibly when $W_2=W_3=\pm 5$. Since $W_2=W_3=-5$ has no integer solutions and $W_2=W_3=5$ has only the solution $A=1$, we conclude that
\eqref{Eq:Wiconditions} holds. Consequently, $\Gal(\FF_{3,A,A})\simeq C_2^2\wr C_2$ by Theorem \ref{Thm:AP}.

We turn now to item \eqref{Main1 I7}. For $n=3$, we see by item \eqref{Main1 I5} that
\begin{align}
4t-4s^2-4s+1&=4t^2+4t-8s^2-8s+1 \label{Eq:1}\\ \nonumber
&\mbox{or}\\
4t-4s^2-4s+1&=4t^2+4t+8s^2+8s+5. \label{Eq:2}
\end{align}
Solving \eqref{Eq:2} reveals no solutions, while solving \eqref{Eq:1} yields the solutions $(s,t)\in \{(0,0),(-1,0)\}$. Both of these solutions produce the value $A=1$, and it is easy to verify that $\FF_{3,1,1}(x)=\Phi_5(x)\Phi_{10}(x)$. It follows then that $\FF_{n,1,1}(x)$ is reducible over $\Q$ for all $n\ge 3$.

Alternatively, we can let $w(x):=\FF_{2,A,A}(x)$ in Proposition \ref{Prop:NG}. Then, using Maple, it is easy to verify that the only solution to $w(x)=S_0(x)^2-xS_1(x)^2$ is
\[S_0(x)=x^2+x+1 \quad \mbox{and} \quad S_1(x)=x+1,\] so that $A=1$, and that $w(x^2)=S_0(x)^2-xS_1(x)^2$ has no solutions (see the argument in \cite{JonesBAMS}). Hence, we conclude that $\FF_{n,A,A}(x)$ is reducible if and only if $A=1$.

 Finally, for item \eqref{Main1 I8}, we first note that $\FF_{3,1,1}(x)$ is not monogenic since $\FF_{3,1,1}(x)$ is reducible over $\Q$. Assume then that $A\ne 1$ so that $\FF_{3,A,A}(x)$ is irreducible over $\Q$ by item \eqref{Main1 I6}. Recall from \eqref{Eq:Delta(f)} that
\[\Delta(\FF_{3,A,A})=2^8W_1^2W_2^2W_3^4.\] Although $\FF_{2,A,A}(x)$ is monogenic by item \eqref{Main1 I2}, we use Theorem \ref{Thm:Dedekind} with $q=2$ and  $T(x):=\FF_{3,A,A}(x)$ to show in contrast that $\FF_{3,A,A}(x)$ is not monogenic.   Since $A\equiv 1 \pmod{4}$, we have that
\[\overline{T}(x)=\Phi_5(x)^2=(x^4+x^3+x^2+x+1)^2,\] and so we can let $h_1(x)=h_2(x)=\Phi_5(x)$. Then
\begin{align*}
  F(x)&=\dfrac{h_1(x)h_2(x)-T(x)}{2}\\
  &=\dfrac{\Phi_5(x)^2-\FF_{3,A,A}(x)}{2}\\
  &=x^7-\left(\dfrac{A-3}{2}\right)x^6+2x^5-\left(\dfrac{A-5}{2}\right)x^4+2x^3-\left(\dfrac{A-3}{2}\right)x^2+x,
  \end{align*}
which implies that
\[\overline{F}(x)=x^7+x^6+x^2+x=x(x+1)^2\Phi_5(x),\] since $A\equiv 1 \pmod{4}$.
Hence, $\gcd(\overline{F},\overline{h_1},\overline{h_2})\ne 1$, and consequently, by Theorem \ref{Thm:Dedekind}, we conclude that $\FF_{3,A,A}(x)$ is not monogenic. It then follows from Lemma \ref{Lem:Inductivemonogenicity} that $\FF_{n,A,A}(x)$ is not monogenic for all $n\ge 3$, which completes the proof of the theorem.
\end{proof}




\end{document}